\documentclass[12pt]{amsart}
\usepackage{amssymb,amsmath}
\usepackage{amsthm}
\usepackage{indentfirst}
\usepackage{amscd}
\usepackage{graphicx}
\numberwithin{equation}{section}

\newtheorem{Theorem}{Theorem}[section]
\newtheorem{Definition}[Theorem]{Definition}

\newtheorem{Lemma}[Theorem]{Lemma}

\newtheorem{Assumption-Notation}[Theorem]{Assumption-Notation}
\newtheorem{Question}{Question}

\newtheorem{Remark}[Theorem]{Remark}

\newtheorem{Problem}{Problem}

\newtheorem{Example}[Theorem]{Example}

\def\triangleq{\stackrel{\triangle}{=}}

\def\e{{\varepsilon}}

\newcommand{\T}{{\mathbb{T}^2}}

\newcommand{\U}{ \mathcal{U}}

\renewcommand{\e} {\varepsilon}

\long\def\symbolfootnote[#1]#2{\begingroup
\def\thefootnote{\fnsymbol{footnote}}\footnote[#1]{#2}\endgroup}

\begin{document}

\title{non-integrability of dominated splitting on $\T$ }
\footnote{B. H is supported by CPSF 2013M540805. S. G is supported by 973 project 2011CB808002, NSFC 11025101 and 11231001.}

\address {Mathematics and Science College\\Shanghai Normal University\\Shanghai 200235\\P.R.China}
\address{Beijing International Center for Mathematical Research\\Peking University\\Beijing 100871\\P.R.China}
\email{   hebaolin@shnu.edu.cn}
\address{School of Mathematics Sciences\\Peking University\\Beijing 100871\\P.R.China}
\email{gansb@math.pku.edu.cn}

\author{Baolin He and Shaobo Gan}
\maketitle

\textbf{Abstract} { We construct a diffeomorphism $f$ on 2-torus with a dominated splitting $E \oplus F$ such that there exists an open neighborhood $\mathcal{U} \ni f$ satisfying that for any $g \in \U$,  neither $E_g$ nor $F_g$ is integrable.}

\section{ Introduction}
According to the theory of Ordinary Differential Equations, Lipschitz vector fields are uniquely integrable. However, the bundles appeared in dynamics are mostly H{\"{o}}lder \cite{PSW}. Due to the hyperbolicity, the stable and unstable bundles are uniquely integrable. Particularly for two-dimensional $C^2$ Anosov diffeomorphisms, the two hyperbolic bundles are $C^1$ \cite{AS}! But, we know little on the integrability of center bundles, which is a really challenging problem \cite{BBI}.

In this paper, we focus on 2-dimensional diffeomorphisms on 2-torus $\T$ with dominated splitting. At first, we recall some related definitions.

Let $E$ be a one-dimensional continuous sub-bundle of $ \mathrm{T}\T$.
\begin{Definition}
 $E$ is said to be integrable if there exists a 1-foliation (continuous partition consisting of immersed 1-dimensional sub-manifolds) of $\T$ tangent to $E$.
\end{Definition}
\begin{Definition}
$E$ is said to be uniquely integrable if there exists exact one 1-foliation of $\T$ tangent to $E$.
\end{Definition}
\begin{Definition}
A $Df$-invariant bundle splitting $E \oplus F = \mathrm{T}\T$ is said to be a dominated splitting, if for any $x \in \T$, any unitary $u \in E_x$ and any unitary $v \in F_x$, $|Df(u)| < |Df(v)|$.
\end{Definition}
Both the two bundles in the splitting are continuous and uniquely defined. And, the dominated splitting is robust: there exists neighborhood $\U \ni f$ such that for any $g \in \U$, $g$ has the dominated splitting $E_g \oplus F_g$.

According to Peano's Theorem, for a continuous vector field, through every point $x$ there exists an integral curve. But, can these curves form a foliation?
\begin{Question}
Let $f$ be a diffeomorphism on $\T$ with a dominated splitting $E \oplus F$. Are these two sub-bundles integrated to foliations? Moreover, if $f$ is $C^2$,  are the two bundles Lipschitz ($C^1$)?
\end{Question}
For partially hyperbolic systems (one of $E$ and $F$ is uniformly hyperbolic), Pujals and Sambarino firstly gave a positive answer for the former question. For the latter, it is still unclear.
\begin{Theorem}\cite{PS,Po}
For partially hyperbolic diffeomorphisms on 2-torus $\T$, the two bundles in the dominated splitting are uniquely integrable.
\end{Theorem}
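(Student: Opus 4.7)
The plan is to treat the two bundles asymmetrically. Without loss of generality assume $F$ is uniformly expanding; the complementary case is obtained by replacing $f$ with $f^{-1}$. The uniformly expanding bundle $F$ is uniquely integrable by the classical Hadamard--Perron theorem: applying the graph-transform argument to $f^{-1}$ produces a $C^0$ foliation $\cF^F$ with $C^1$ leaves tangent to $F$, and uniqueness follows from the exponential expansion of $F$. So the substantive content lies in integrating the non-hyperbolic bundle $E$ and proving uniqueness there.

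For existence of an $E$-foliation, I would lift to the universal cover $\R$ and use the lifted foliation $\widetilde{\cF}^F$ as a global transversal system. Peano's theorem supplies a local $C^1$ integral curve of $E$ through every point; the task is to select these curves coherently. For each $x$ I would define a plaque as a maximal short $E$-arc through $x$ whose two endpoints lie on two fixed nearby $\widetilde{\cF}^F$-leaves. Because $Df$ preserves the splitting and sends $\widetilde{\cF}^F$-leaves to $\widetilde{\cF}^F$-leaves, the plaques transport along the unstable holonomy in an $f$-equivariant fashion; continuity of $E$ and compactness of $\T$ then glue them into a continuous foliation $\cF^E$ tangent to $E$.

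Uniqueness of $\cF^E$ is the main obstacle. Suppose $\gamma_1 \neq \gamma_2$ are two $C^1$ arcs through $x$, both tangent to $E$ at every point. On the universal cover they bound a topological bigon $D$, which can be foliated by short segments of $\widetilde{\cF}^F$. Parameterize $D$ by coordinates $(t,s)$ where $t$ measures $E$-arclength from $x$ and $s$ measures $F$-length across. Push $D$ forward by $f^n$: the $E$-direction stretches by $\|Df^n|_E\|$ while the $F$-direction stretches by $\|Df^n|_F\|$, and domination forces $\|Df^n|_F\|/\|Df^n|_E\| \to \infty$. Hence the transverse $F$-separation between $f^n(\gamma_1)$ and $f^n(\gamma_2)$ eventually exceeds any prescribed bound while their $E$-length remains small. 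This is incompatible with both images being $C^1$ curves tangent near $f^n(x)$ to the continuous bundle $E$, yielding a contradiction.

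The delicate point in making the uniqueness argument precise is the topological bookkeeping: one must ensure that the bigon $D$ remains embedded under forward iteration and that $\widetilde{\cF}^F$-arcs inside $f^n(D)$ do not wind around $\T$ in an uncontrolled way, which would spoil the length comparison. This is where two-dimensionality and the global structure of $\cF^F$ on $\T$ enter essentially, through the absence of Reeb-type components in the unstable foliation and the monotonicity of unstable holonomy along transversals. I expect this topological control to be the subtlest part of the proof, with the arithmetic comparison of $\|Df^n|_E\|$ versus $\|Df^n|_F\|$ being easy once the geometry is nailed down.
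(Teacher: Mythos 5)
First, note that the paper contains no proof of this statement: it is quoted from \cite{PS} and Potrie's thesis, so there is no internal argument to compare yours against. Your treatment of the strong bundle is fine --- unique integrability of a uniformly expanding bundle that dominates its complement is the standard strong-unstable manifold theorem. The genuine gap is in your treatment of $E$, for both existence and uniqueness.

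Your uniqueness argument invokes only the domination ratio $\|Df^n|_F\|/\|Df^n|_E\|\to\infty$, and your existence argument never uses the uniform expansion of $F$ at all. Any argument of that shape must fail, because the main theorem of this very paper exhibits a dominated, non--partially hyperbolic splitting on $\T$ whose bundle $E$ is not even integrable: by the key lemma of Section 3, every $E$-curve of length $3\e$ centered in the open set $B(q)$ passes through the source $p$, so there are many distinct $E$-integral curves through $p$, all tangent to $E(p)$. Run your bigon argument on two of them: the relevant orbits converge to the sink $q$, where $\|Df^n|_F\|\to 0$, so the \emph{absolute} transverse separation shrinks even though its \emph{ratio} to $\|Df^n|_E\|$ blows up. You have conflated growth of the ratio with growth of the actual $F$-separation; the latter needs $\|Df^n|_F\|\to\infty$, i.e.\ the partial hyperbolicity you never actually use. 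Moreover, even granting uniform expansion of $F$, your claim that ``their $E$-length remains small'' is unjustified: partial hyperbolicity is compatible with periodic sources (the single DA map of Lemma \ref{da} is partially hyperbolic with the source $p$), near which $\|Df^n|_E\|$ grows exponentially, so the $E$-length of $f^n(\gamma_i)$ need not stay bounded and the length-versus-separation comparison does not close. The actual proof in \cite{PS} is substantially more delicate: it constructs locally invariant plaque families tangent to the one-dimensional bundle (in the spirit of Hirsch--Pugh--Shub and Takens \cite{Ta}) and establishes coherence and uniqueness through a case analysis on the asymptotic behaviour of $\|Df^n|_E\|$ along orbits; none of that is captured by the comparison you propose, and the ``topological bookkeeping'' you flag as the delicate point is not where the real difficulty lies.
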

In this paper, we give a negative answer for the question:
\begin{Theorem}\label{thm} There exists a diffeomorphism $f$ on 2-torus with a dominated splitting $E \oplus F$ such that, there is an open neighborhood $\U \ni f$ satisfying that for any $g \in \U$, neither $E_g$ nor $F_g$ is integrable and hence neither of them is Lipschitz.
\end{Theorem}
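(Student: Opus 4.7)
The plan is to exhibit $f$ by a local surgery on a linear Anosov diffeomorphism and to extract a topological obstruction to integrability that is preserved under $C^1$-small perturbation. I would carry out four steps.

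\medskip
\textbf{Construction.} Begin with a linear Anosov $A:\T\to\T$; its spectral splitting $E^s\oplus E^u$ is a (trivially integrable) dominated splitting, with linear foliations $\cF^s, \cF^u$ whose leaves are dense and recurrent. I would then pick two disjoint small disks $D_s, D_u\subset\T$ and perform two carefully chosen localized modifications. Inside $D_s$, apply a Derived-from-Anosov type surgery at a fixed point $p$ so that $Df|_E$ becomes mildly expanding at $p$ (creating a source) while keeping $|Df|_E|<|Df|_F|$ and keeping $f=A$ on $\T\setminus D_s$. Inside $D_u$, apply the time-reversed surgery at another fixed point $q$, turning $Df|_F$ into a mild contraction (creating a sink). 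The resulting $f$ has a dominated splitting $E\oplus F$, is no longer partially hyperbolic, and the bundles $E, F$ agree with $E^s, E^u$ outside $D_s\cup D_u$.

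\medskip
\textbf{Obstruction to integrability.} Suppose for contradiction that a 1-foliation $\cF$ tangent to $E$ exists. Outside $D_s\cup D_u$, the line field $E$ equals $E^s$, so $\cF$ restricted there is a sub-foliation of $\cF^s$, i.e.\ consists of arcs of the linear $E^s$-leaves. Since these leaves are dense and recurrent, they must enter $D_s$ infinitely often. The key point is that I would arrange the local surgery at $p$ so that $E$ inside $D_s$ twists away from $E^s$ and, at $p$, is transverse to the arriving $E^s$-leaves. Any would-be leaf of $\cF$ through $p$ must be tangent to $E_p$, but also must continue outside $D_s$ along $E^s$; matching the two produces a Reeb-type pocket or a branching of $\cF$ at $p$, contradicting the definition of a foliation as a continuous partition by 1-manifolds. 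A symmetric argument applied at $q$ inside $D_u$ rules out a foliation tangent to $F$.

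\medskip
\textbf{Robustness.} The dominated splitting is $C^1$-robust, the fixed points $p,q$ are hyperbolic for $Df$ and hence persist, and the bundles $E_g, F_g$ vary continuously with $g$ in the $C^0$-topology over compact sets. The conditions set up in the construction (the twist direction of $E$ at $p$ and of $F$ at $q$, together with the transversality to the linear directions arriving from outside the disks) are $C^1$-open conditions, so the contradiction argument above applies uniformly to every $g$ in some small $C^1$-neighborhood $\U$ of $f$. The non-Lipschitzness then follows at once, because a Lipschitz one-dimensional sub-bundle of $T\T$ is uniquely integrable by the classical Picard--Lindel\"of theorem.

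\medskip
\textbf{Main obstacle.} The central difficulty is the second step: giving a rigorous topological proof that \emph{no} foliation tangent to $E$ can exist. Ruling out every conceivable global foliation requires combining the local twist of $E$ at $p$ with the global dynamics of $\cF^s$ for the underlying Anosov model, and showing that the mismatch between the incoming linear direction and the surgery-created direction at $p$ cannot be absorbed by any continuous leaf structure. This likely requires a Poincar\'e--Denjoy style analysis of $C^0$-foliations on $\T$ compatible with a given recurrent line field, plus quantitative control on the DA surgery to guarantee the angle mismatch at $p$ is strictly positive and robust.
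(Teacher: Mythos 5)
Your obstruction argument does not work, and the gap you flag in your final paragraph is fatal rather than merely technical. First, a single DA surgery that turns $p$ into a source while keeping the domination and $f=A$ outside $D_s$ leaves $F$ uniformly expanding, i.e.\ the resulting map is still partially hyperbolic; by the Pujals--Sambarino theorem quoted in the introduction both bundles of such a map \emph{are} uniquely integrable, so no local behaviour of $E$ inside $D_s$ alone can obstruct integrability. Second, the twist mechanism you propose cannot even be arranged: to keep a dominated splitting via Lemma \ref{dmn} one needs the off-diagonal terms small, which forces $\angle(E,E^s)<\delta$ uniformly, so $E_p$ is never transverse to the incoming $E^s$-leaves; and even a genuine angle deviation would not preclude a foliation, since Peano's theorem already provides integral curves through every point and ruling out a continuous selection of them requires a global dynamical argument, not a local ``Reeb-type pocket.''

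The paper's actual obstruction uses a different configuration: a \emph{second} DA is performed inside the first, so that a source $p$ and a sink $q$ coexist at mutual distance $<\e$, while $W^{uu}_{\frac{1}{10}}(p,f)$ is an $F$-curve of definite length $\frac{1}{10}\gg\e$. The key lemma shows that every $E$-curve of length $3\e$ centered at a point of the (open, two-dimensional) basin of $q$ must cross $W^{uu}_{\frac{1}{10}}(p,f)$ in exactly one point, and an order-plus-expansion argument along $W^{uu}$ (if the set of crossing points had an extreme point other than $p$, applying $f$ would push a nearby crossing point strictly past it) forces that crossing point to be $p$ itself. Hence all would-be leaves through an open set pass through the single point $p$, contradicting the partition property of a foliation; the symmetric construction at a second fixed point kills $F$. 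Your version, with the source and the sink in disjoint disks $D_s$ and $D_u$, does not produce this funneling, and the Poincar\'e--Denjoy analysis you defer to is not a substitute for it. (Your robustness paragraph and the observation that a Lipschitz line field is uniquely integrable are fine once the correct non-integrability lemma is in place.)
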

In our construction, the non-integrability happens in a small neighborhood of sink (source). On the contrary, in \cite{PS}, it has an interesting corollary that \textit{ ``for any $C^2$ diffeomorphism on 2-torus with dominated splitting, if periodic points are all hyperbolic saddles, then the two bundles are uniquely integrable''}. How about $C^1$ systems:
\begin{Problem}
Given a $C^1$ diffeomorphism on 2-torus with a dominated splitting, if periodic points are all hyperbolic saddles, are the two bundles integrable?
\end{Problem}
Between the integrability and unique integrability, there exists such surprising phenomena for a H$\ddot{o}$lder continuous vector field on the plane: \textit{there are uncountable distinct foliations tangent to the vector field }\cite{BF}. So, it is natural to ask:
\begin{Problem}
Is there such diffeomorphism with dominated splitting $E \oplus F$ satisfying that, $E$($F$) is integrated to different foliations?
\end{Problem}
\section{two basic lemmas }
At first, we introduce some notations used through the paper. Let
\[A={\begin{pmatrix}
 2 & 1 \\
 1&1
\end{pmatrix}}^2.\]
Let $0 < \lambda <1 < \mu$ be the two eigenvalues of $A$, $E^s$ be the contracting eigenspace of $A$, and $E^u$ be the expanding eigenspace of $A$.
Let $f_A$ be the hyperbolic automorphism on 2-torus induced by $A$, which has two fixed point at least.  $E^s$ and $E^u$ induces the hyperbolic splitting of $f_A$, still denoted as $E^s \oplus E^u = \mathrm{T}\T$; and the two eigenspaces induce the coordinate $\{ \frac{\partial} {\partial{x_1}}, \frac{\partial} {\partial{x_2}} \}$ on 2-torus.

Let $f$ be a diffeomorphism on $\T$, the norm of $Df$ is denoted by
$$||Df|| = \max \{|Df(v)|/|v|: 0\not= v \in \mathrm{T}\T \}.$$
The norm of $Df$ restricted on a sub-bundle $E$, is denoted by $||Df|_{E}||$.\\
For a hyperbolic fixed non-sink $x$ with the dominated splitting $T_x (M)= E(x) \oplus F(x)$, we define the strong unstable manifold $W^{uu}_{\frac{1}{10}}(x,f)$ as:
$$\{ y: d(f^{-n}(y),x)<\frac{1}{10},{\rm \ and \ }\exists N, {\rm\ s.t. \ }\frac{d(f^{-n}(y),x)}{|| Df^{-n}|_{E(x)}||} < \frac{1}{2} ,\ \forall n > N \}.$$
Similarly, we can define strong stable manifold $W^{ss}_{\frac{1}{10}}(x,f)$ for a hyperbolic fixed non-source $x$ with dominated splitting.\\

Now we give two basic lemmas. Firstly, we recall DA-operation\cite{Wi}:
\begin{Lemma}\label{da}
Let $p=(0,0)$ be a fixed point of $f_A$. Then, for any $\e >0$, there exists $C^0$-perturbation $f$ of $f_A$ such that:
\begin{enumerate}
\item[(1)] { $f(x)=f_A(x)$ outside the $\e$-ball $B(p,\e)$;}
\item[(2)] { for any $x \in M$,
\[Df(x)=
\begin{pmatrix} a(x) & b(x) \\
0 & \mu
\end{pmatrix} ,\]
here $\lambda^2<a(x) <\sqrt{\mu},\ |b(x)| < \e$ ;}
\item[(3)] {In $W^{s}(p,f_A)$, $f$ has exactly three periodic points contained in $B(p,\e)$: one fixed source $p$ and two fixed saddles;}
\item[(4)] {In some neighborhood of the two saddles above, $Df$ are constant diagonal matrixes;}
\item[(5)] {$W^{uu}_{\frac{1}{10}}(p,f) = \{0\} \times (-\frac{1}{10},\frac{1}{10}).$}
\end{enumerate}
Similarly, there exists a symmetrical DA-operation of $f_A$: to do the same DA-operation of $f_A^{-1}$.
\end{Lemma}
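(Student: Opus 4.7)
I will work in the linear chart around $p = (0,0)$ given by the eigenspace coordinates $(x_1, x_2)$ of $A$, in which $f_A$ takes the form $(x_1,x_2) \mapsto (\la x_1, \mu x_2)$ and the $f_A$-stable manifold of $p$ coincides with the $x_1$-axis. The idea is the classical derived-from-Anosov construction: leave the $x_2$-dynamics intact and deform the $x_1$-dynamics inside a small $\e$-ball around $p$, turning $p$ from a saddle into a source and inserting two new fixed saddles on the $x_1$-axis.

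First I would build a $C^\infty$ function $h : \RR \to \RR$ supported in $(-\e, \e)$ with $h(0) = 0$, such that $g(x_1) := \la x_1 + h(x_1)$ is strictly monotone with exactly three fixed points $\{-q, 0, q\} \subset (-\e, \e)$, with $g'(0) \in (1, \sqrt{\mu})$, and with $g' \equiv \la$ on open neighborhoods of $\pm q$; and globally $\la^2 < g'(x_1) < \sqrt{\mu}$ and $|h(x_1)| < \e^2/4$. Such an $h$ is produced by standard one-dimensional bump engineering: tilt the graph of $y = \la x_1$ upward between $-q$ and $0$ through the diagonal, then back down between $0$ and $q$, with flat plateaus of slope $\la$ around $\pm q$.

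Pick also a $C^\infty$ cutoff $\psi : \RR \to [0,1]$ with $\psi \equiv 1$ on $[-\e/2,\e/2]$, $\psi \equiv 0$ outside $(-\e,\e)$, and $|\psi'| \leq 4/\e$, and set
\[
f(x_1, x_2) := (\la x_1 + \psi(x_2)\,h(x_1),\; \mu x_2)
\]
in this chart and $f := f_A$ elsewhere. Then
\[
Df(x) = \begin{pmatrix} \la + \psi(x_2) h'(x_1) & \psi'(x_2) h(x_1) \\ 0 & \mu \end{pmatrix},
\]
so (1) holds from the supports, (2) holds because $\psi \in [0,1]$, $h'(x_1)\in (\la^2-\la,\, \sqrt{\mu}-\la)$, and $|\psi'(x_2) h(x_1)| \leq (4/\e)(\e^2/4) = \e$, and (4) holds because near $\pm q$ on the $x_1$-axis we have $h' \equiv 0$ and $\psi \equiv 1$, making $Df = \mathrm{diag}(\la,\mu)$ constant. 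The $x_1$-axis is $f$-invariant and $f$ restricted to it equals $g$, a monotone 1-D diffeomorphism, so every periodic point of $f$ in $W^{s}(p,f_A) \cap B(p,\e)$ is a fixed point of $g$, giving (3). Since $h(0) = 0$, also $f(0,x_2) = (0,\mu x_2)$: the $x_2$-axis is $f$-invariant and expanded at rate $\mu > a(0)$. A standard computation using the strict domination $a(0) < \sqrt{\mu} < \mu$ then identifies the set of points in $B(p,1/10)$ whose backward $f$-orbit decays at the faster rate $\mu^{-n}$ rather than the slower $a(0)^{-n}$ as exactly $\{0\}\times(-1/10,1/10)$, yielding (5). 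The symmetric DA for $f_A^{-1}$ is done identically.

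The only delicate point is assembling the one-variable function $h$: it must simultaneously make $g'(0) > 1$, keep $g'(\pm q) = \la$, make $g'$ literally constant on neighborhoods of $\pm q$ (needed for (4)), and keep $g'$ inside the narrow window $(\la^2, \sqrt{\mu})$ globally. This is entirely explicit but requires careful parameter choices.
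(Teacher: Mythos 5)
Your construction is essentially the paper's own: the paper likewise sets $f(x_1,x_2)=(\alpha(x_1)\beta(x_2)+\la x_1,\ \mu x_2)$ with a one-variable bump $\alpha$ (your $h$) producing the source at $0$ and the two flat-slope fixed points on the $x_1$-axis, and a cutoff $\beta$ (your $\psi$) in $x_2$, controlling the off-diagonal entry $\alpha(x_1)\beta'(x_2)$ by balancing the size of the bump against the derivative of the cutoff exactly as you balance $|h|<\e^2/4$ against $|\psi'|\le 4/\e$. The verification of properties (1)--(5) is the same, so your proposal matches the paper's proof.
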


For completion, we give a proof of this basic lemma in the following.
\begin{proof}
Let $I_1 \times I_2 \subset B(p,\e)$, $I_1$ and $I_2$ both are intervals centered at 0, and
$$\ell{(I_1)} =\frac{\e}{3\mu} \ell(I_2).$$
Take a smooth bump function $\alpha$ satisfying the following conditions:
\begin{enumerate}
\item[(1)] {$\alpha(x)$ is an odd function and $\alpha(x)=0$, for $x \not\in I_1$ ;}
\item[(2)] {$\lambda^2-\lambda <\alpha' (x) < \sqrt{\mu}- \lambda$;}
\item[(3)] {$\alpha(x)+\lambda x$ has  exactly three periodic points all contained in $I_1$: one fixed source $0$ and two fixed sinks;}
\item[(4)] {In some neighborhood of the above two sinks, $\alpha'(x)$ are constant values.}
\end{enumerate}
Take another bump function $\beta$ satisfying that,
\[\begin{cases}
        \beta(x)=1, x \, \text{in a small neighborhood of} \, 0; \\
        \beta(x)=0, x \not\in I_2; \, 0\leq \beta(x) \leq 1;\\
        |\beta'(x)| < 3/{\ell(I_2)}
\end{cases}\]
Let
$$f(x_1,x_2)=(\alpha(x_1) \beta(x_2)+\lambda x_1 , \mu x_2).$$
Then,
\[Df=
\begin{pmatrix}\alpha'(x_1) \beta(x_2)+\lambda & \alpha(x_1) \beta'(x_2) \\
0 & \mu
\end{pmatrix}\]
Note that $\lambda^2- \lambda <\alpha' (x) < \sqrt{\mu}- \lambda$, $\beta \in [0,1]$ and $\ell{(I_1)} =\frac{\e}{3\mu} \ell(I_2).$\\
 Then,
$$|\alpha(x_1) \beta'(x_2)|<\mu \ell(I_1) \times \frac{3}{\ell(I_2)}=\e.$$
And,
$$\lambda^2<\alpha'(x_1) \beta(x_2)+\lambda  < \sqrt{\mu}.$$
This verifies the property (2) in the lemma.
From the properties (3) and (4) of function $\alpha$ and $\beta'(x)=0 $ in a small neighborhood of $0$, we get the properties (3) and (4) of the lemma. Since $Df$ are diagonal matrices on the line $\{0\} \times (-\frac{1}{10},\frac{1}{10})$, $f$ satisfies property (5).
\end{proof}
The next lemma is a classic theorem (e.g., see appendix B in \cite{BDV}), which gives a sufficient condition for a diffeomorphism to have a dominated splitting. For any two sub-bundles $E,F \subset \mathrm{T}\T$, $$\angle{(E,F)} \triangleq \max \{\angle{(u,v)}: \forall u\in E_x, v \in F_x, x \in \T \}$$
\begin{Lemma}\label{dmn}Let $\mathcal{K}>0$, $\eta >1, \delta>0$, there exists $\e>0$ such that for any diffeomorphism $f$ on $\T$  and
under the coordinate $\{ \frac{\partial} {\partial{x_1}}, \frac{\partial} {\partial{x_2}} \}$,
\[ Df(x)=\begin{pmatrix}
a(x) & b(x)\\
c(x) & d(x)
\end{pmatrix} \]
satisfying that for any $x \in \T$,
\[\begin{array}{ll}
\min \{ |a(x)|, |d(x)| \} > \mathcal{K},\\
 |d(x)| > \eta |a(x)|,\\
 \max \{ |b(x)|,|c(x)| \}  < \e,
\end{array}\]
$f$ has the dominated splitting $E\oplus F$ with the property that
$$\angle{(E,E^s)} < \delta, \quad \angle{(F,E^u)} < \delta.$$
\end{Lemma}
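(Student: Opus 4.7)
The plan is to apply the classical cone-field criterion for dominated splitting, working in the coordinate frame $\{\frac{\partial}{\partial x_1}, \frac{\partial}{\partial x_2}\}$ which, by construction, is aligned with $E^s$ and $E^u$ of $f_A$. At each $x \in \T$ I define a horizontal cone
\[ C^h(x) = \left\{ v_1 \tfrac{\partial}{\partial x_1} + v_2 \tfrac{\partial}{\partial x_2} : |v_2| \le \tan\delta \cdot |v_1|\right\} \]
centered on $E^s$, and symmetrically a vertical cone $C^v(x)$ centered on $E^u$. The entire argument then reduces to elementary $2 \times 2$ matrix inequalities in $a, b, c, d$.

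The first step is to show strict forward invariance $Df(x)C^v(x) \subsetneq C^v(f(x))$ and backward invariance $Df(x)^{-1} C^h(f(x)) \subsetneq C^h(x)$. For $v = v_1\frac{\partial}{\partial x_1} + v_2\frac{\partial}{\partial x_2} \in C^v(x)$, direct estimation gives
\[ |av_1 + bv_2| \le (|a|\tan\delta + \e)|v_2|, \qquad |cv_1 + dv_2| \ge (|d| - \e\tan\delta)|v_2|, \]
so the condition $|av_1 + bv_2| \le \tan\delta\,|cv_1 + dv_2|$ reduces to $\e\sec^2\delta \le (|d|-|a|)\tan\delta$. Since $|d|-|a| > (\eta-1)\mathcal{K}$ by hypothesis, this holds whenever $\e < (\eta-1)\mathcal{K}\sin\delta\cos\delta$. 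A symmetric computation handles $C^h$ under $Df^{-1}$. By shrinking $\e$ a little more I obtain strict nesting, i.e. the cones contract geometrically along orbits.

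Second, set
\[ F(x) = \bigcap_{n \ge 0} Df^n\bigl(C^v(f^{-n}x)\bigr), \qquad E(x) = \bigcap_{n \ge 0} Df^{-n}\bigl(C^h(f^n x)\bigr). \]
Strict cone invariance implies each intersection is a single line, yielding a continuous $Df$-invariant splitting $E \oplus F = \mathrm{T}\T$ with $\angle(E, E^s) \le \delta$ and $\angle(F, E^u) \le \delta$ directly from the cone widths. For the domination inequality, I use that the invariant line $F(x)$ lies in $Df^N(C^v(f^{-N}x))$ for every $N$, which is a cone whose opening shrinks geometrically with $N$; thus unit vectors in $E(x)$, $F(x)$ are arbitrarily close to $\frac{\partial}{\partial x_1}$, $\frac{\partial}{\partial x_2}$ respectively, so $|Df|_{E(x)}|$ is arbitrarily close to $|a(x)|$ and $|Df|_{F(x)}|$ to $|d(x)|$, and $|a|/|d| < 1/\eta < 1$ then gives a uniform domination gap.

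The main obstacle is purely bookkeeping: choosing a single $\e = \e(\mathcal{K}, \eta, \delta)$ so that all three conditions—forward invariance of $C^v$, backward invariance of $C^h$, and the sharp ratio bound giving domination—hold simultaneously. The uniform lower bound $\mathcal{K}$ on $|a|, |d|$ is what makes this possible, since it guarantees that the off-diagonal entries $b, c$ with $|b|, |c| < \e$ are genuinely a small perturbation of the diagonal part of $Df$; without $\mathcal{K}$ the cone conditions could fail even for tiny $\e$. Once $\e$ is chosen smaller than $c_0 \cdot (\eta-1)\mathcal{K}\sin\delta\cos\delta$ for an explicit absolute $c_0$, everything assembles cleanly with no further analytic difficulty.
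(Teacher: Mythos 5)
The paper itself does not prove this lemma; it only cites it as a classical fact (Appendix~B of \cite{BDV}), and your cone-field argument is precisely the standard proof of that fact, so you are on the intended route. Your invariance computation is correct: for $v\in C^v(x)$ the bounds $|av_1+bv_2|\le(|a|\tan\delta+\e)|v_2|$ and $|cv_1+dv_2|\ge(|d|-\e\tan\delta)|v_2|$ do reduce invariance to $\e\sec^2\delta\le(|d|-|a|)\tan\delta$, which follows from $\e<(\eta-1)\mathcal{K}\sin\delta\cos\delta$ since $|d|-|a|>(\eta-1)\mathcal{K}$; the symmetric computation for $Df^{-1}$ on $C^h$ and the nested-intersection construction of $E$ and $F$ with $\angle(E,E^s)\le\delta$, $\angle(F,E^u)\le\delta$ are fine.

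The domination step, however, has a genuine gap. You assert that because $F(x)$ lies in $Df^N\bigl(C^v(f^{-N}x)\bigr)$, whose opening shrinks geometrically in $N$, unit vectors of $F(x)$ are arbitrarily close to $\frac{\partial}{\partial x_2}$. That inference is false: a very thin subcone of $C^v(x)$ can sit anywhere inside $C^v(x)$, so a shrinking opening tells you nothing about proximity to the central axis. What your construction actually yields is only $\angle(F,E^u)\le\delta$ and $\angle(E,E^s)\le\delta$, and with that alone the honest estimates are $||Df|_{E}||\le(|a|+\e)\sec\delta$ (using $Df(E)\subset C^h$) against $||Df|_{F}||\ge(|d|-\e\tan\delta)\cos\delta$, giving a contraction ratio of order $1/(\eta\cos^2\delta)$, which exceeds $1$ when $\delta$ is not small relative to $\eta-1$; so domination does not follow from what you proved. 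Two routine repairs exist: (i) note that a smaller $\delta$ only strengthens the conclusion, so one may assume from the outset $\delta\le\delta_0(\eta)$ with $\cos^2\delta_0>1/\eta$; or (ii) show that the much thinner cone of slope $S=2\eta\e/((\eta-1)\mathcal{K})$ about the coordinate axes is itself invariant (the slope recursion $s\mapsto(|a|s+\e)/(|d|-\e s)$ stabilizes near $\e/(|d|-|a|)$), so that $E$ and $F$ really do lie within $O(\e)$ of $E^s$ and $E^u$ and the ratio tends to $|a|/|d|<1/\eta$ as $\e\to0$. Either fix is standard, but as written the key claim that the invariant lines are arbitrarily close to the axes is unjustified.
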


\section{A robustly non-integrable example}
Firstly, we construct a diffeomorphism on 2-torus with the dominated splitting $E \oplus F$ such that, $E$ is robustly non-integrable. It is a special DA-map: to do DA-operation twice.\\
\begin{Example}
 Let $p$ be a fixed point of $f_A$, $\e >0$ be a very small constant (to be determined in the following construction). By DA-operation in $B(p,\e)$ , we can take a map $g$ such that
\[ g(x) = f_A (x), x \not\in B(p,\e),\]
\[ Dg=\begin{pmatrix}
 a(x) & b(x)\\
0 & \mu
\end{pmatrix},\]
here, $\lambda^2 < a(x)< \sqrt{\mu}  ,|b(x)| < \e$. And, $g$ has the two fixed points: source $p$ and saddle $q \in B(p,\e)$.\\
Also, there exists a open neighborhood $U \ni q$ such that for any $x \in U$:
\[ Dg(x)=\begin{pmatrix}
a(q) & 0\\
0& \mu
\end{pmatrix}.\]
In this smaller neighborhood $U$, we make another DA-operation $f$ of $g$ such that,\\
$f$ has the two fixed points source $p$ and sink $q$ in $B(p,\e)$, both the length of two components of $W^{uu}_{ \frac{1}{10}}(p,f)-p$ equals $\frac{1}{10}$, and
\[Df=\begin{pmatrix}
a_1(x) & b_1(x)\\
c_1(x) & d_1(x)
\end{pmatrix}\]
satisfying that there exists $\mathcal{K} >0$ and $\eta >1$ such that
\[\begin{array}{ll}
\min \{ |a_1(x)|, |d_1(x)| \} > \mathcal{K},\\
 |d_1(x)| > \eta |a_1(x)|,\\
 \max \{ |b_1(x)|,|c_1(x)| \}  < \e.
\end{array}\]
Let $\delta < \frac{1}{1000}$, and $\e< \frac{1}{1000}$ satisfying lemma \ref{dmn}.
\vspace{3mm}

\textbf{Then, $f$ satisfies the following properties:}
\begin{enumerate}
\item[(1)] {$f$ has the dominated splitting $E \oplus F$;}
\item[(2)] {$\angle (E^s, E) < \delta, \angle (E^u, F) < \delta$;}
\item[(3)] { $f(x) = f_A (x), x \not\in B(p,\e)$;}
\item[(4)] {$f$ has two fixed points in $B(p,\e)$ : source $p$ and sink $q$;}
\item[(5)] {the length of two components of $W^{uu}_{ \frac{1}{10}}(p,f)-p$ both equals $\frac{1}{10}$, and \\
$2 \e ||Df|| < \frac{1}{10} $. }
\end{enumerate}
\end{Example}
A curve $\gamma^E$ is said to be an $E$-curve, if $\gamma^E$ is tangent to $E$ everywhere. Similarly, we define $F$-curve. The non-integrability of $E$ results from the following fact:

\begin{Lemma}Let $B(q)$ be the intersection of $B(p,\e)$ and the basin of the sink $q$.
For any $x \in B(q)$ and any $E$-curve $\gamma^E$ of length $3 \e$ centered at $x$, we have that $p  \in  \gamma^E$.
\end{Lemma}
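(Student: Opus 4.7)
The plan is to proceed by contradiction: assume $p \notin \gamma^E$. Since $\angle(E, E^s) < \delta < 1/1000$ and in the eigencoordinates of $A$ one has $E^s = \partial/\partial x_1$, the $E$-curve $\gamma^E$ is nearly horizontal, with horizontal extent at least $3\e \cos\delta > 2\e$ and vertical variation at most $1.5\e \sin\delta$. Because $x \in B(p, \e)$ has $|x_1| < \e$, the horizontal span of $\gamma^E$ covers $x_1 = 0$, and by transversality with the vertical line $\{x_1 = 0\}$ the two meet at a unique point $y = (0, y_2)$ with $|y_2| \le |x_2| + 1.5\e \sin\delta < 2\e$. For $\e$ small this is less than $1/10$, so by property (5) of Lemma \ref{da} the point $y$ lies in the local strong unstable manifold $W^{uu}_{1/10}(p, f) = \{0\} \times (-1/10, 1/10)$.

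Next, suppose toward a contradiction that $y \neq p$; then $y_2 \neq 0$. Because $W^{uu}(p, f)$ is $f$-invariant and the $F$-eigenvalue along it equals $\mu$, the iterates $f^n(y) = (0, \mu^n y_2)$ travel along $W^{uu}$ at rate $\mu$, so at the first step $N$ with $|\mu^N y_2| > 1/10$ one has $|f^N(y) - p| > 1/10$. Since $x \in B(q)$ forces $f^n(x) \to q$, enlarging $N$ if needed also gives $|f^N(x) - q| < \e/2$, hence $|f^N(x) - p| < 3\e/2$. Both $f^N(x)$ and $f^N(y)$ lie on the iterated $E$-curve $f^N(\gamma^E)$, separated by more than $1/10 - 3\e/2 > 1/20$. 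The arc from $x$ to $y$ along $\gamma^E$ has length at most $1.5\e$, and under forward iteration its length grows at rate $\|Df|_E\|$, which is bounded by $\sqrt\mu$ inside $B(p, \e)$ (by Lemma \ref{da}(2)) and equals $\lambda < 1$ outside, where $f = f_A$. The strict domination $\sqrt\mu < \mu$ together with the hypothesis $2\e \|Df\| < 1/10$ should then show that $|f^N(\gamma^E)|$ cannot bridge the separation $1/20$, giving the desired contradiction.

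The main obstacle I anticipate lies in making the length-growth estimate of the final step tight enough. Since $\gamma^E$ has length $3\e$ while $B(p, \e)$ has diameter $2\e$, at least a third of the arc initially sits outside $B(p, \e)$ where the Anosov contraction $\lambda$ applies, and careful bookkeeping is required to ensure the expanding near-$p$ portion at rate $\le \sqrt\mu$ cannot outrun the strictly faster separation rate $\mu$ along $W^{uu}(p)$ by step $N$. The hypothesis $2\e \|Df\| < 1/10$, which forces a single iterate to send curves of length $\le 2\e$ into the $W^{uu}_{1/10}$-window, together with the strict domination $\eta > 1$ built into Lemma \ref{dmn}, is what ultimately yields the contradiction.
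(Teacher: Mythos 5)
Your first half agrees with the paper: the near-horizontality of $\gamma^E$ ($\angle(E,E^s)<\delta$) forces a unique transverse intersection $y$ with the vertical segment $W^{uu}_{\frac{1}{10}}(p,f)=\{0\}\times(-\frac{1}{10},\frac{1}{10})$ at distance $<2\e$ from $p$. But from there the two arguments part ways, and yours has a genuine gap. The paper never iterates $N$ times: it collects \emph{all} such intersection points into a set $I$, takes the supremum $b$ of $I$ in the natural order on $W^{uu}_{\frac{1}{10}}(p,f)$, and applies $f$ \emph{once} to a point $y\in I$ near $b$. The single-step estimate $\ell(f(\gamma^E))<3\e$ shows the image curve is again admissible, so $f(y)\in I$; one-step uniform expansion along $W^{uu}$ pushes $f(y)$ strictly past $b$, contradicting maximality. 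Your long-time argument, by contrast, runs into exactly the obstacle you flag, and the rate comparison $\sqrt\mu<\mu$ does not resolve it. The escape time $N$ is pinned by $|y_2|$ (it is the first $n$ with $\mu^n|y_2|>\frac{1}{10}$, and you cannot ``enlarge $N$'' --- past the window you lose all control of $f^N(y)$, since $W^u(p)$ is dense in $\T$, and you also have no a priori localization of $f^N(x)$ at that particular time). With the bound $\|Df|_E\|\le\sqrt\mu$ on all of $B(p,\e)$, the arc from $x$ to $y$ at time $N$ is only controlled by $1.5\e\,\mu^{N/2}\approx 1.5\e/\sqrt{10|y_2|}$, which is \emph{unbounded} as $y\to p$; so for $y$ very close to $p$ your estimate cannot prevent the arc from bridging the gap $\frac{1}{20}$.

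The missing ingredient is quantitative, not just bookkeeping: the region where $Df|_E$ actually expands is not $B(p,\e)$ but the strip $I_1\times I_2$ of the DA-construction, whose width is $\ell(I_1)=\frac{\e}{3\mu}\ell(I_2)=O(\e^2)$. A nearly horizontal $E$-curve meets this strip in length $O(\e^2)$, while the rest of the curve is contracted by $\approx\lambda<1$ (and the second DA region near $q$ also contracts $E$). Hence $\ell_{n+1}\le\lambda\,\ell_n+O(\e^2)$ and arc lengths stay $O(\e)$ \emph{uniformly in $n$} --- this is the fact hiding behind the paper's one-line claim $\ell(f(\gamma^E))<3\e$, and it is what your final step needs. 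If you import that estimate, your direct iteration can be salvaged (you would conclude $d(f^N(x),f^N(y))=O(\e)$ while $d(f^N(y),p)\ge\frac{1}{10}$, and then still need to argue that $f^N(x)$ remains in a fixed $O(\e)$-neighborhood of $q$); but as written, the proposal's concluding estimate fails and the lemma is not proved.
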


\begin{proof}
We give the natural order on the curve $W^{uu}_{\frac{1}{10}}(p,f)$. By the dominated splitting, it is not difficult to show that,
$$\mathrm{T}{W^{uu}_{\frac{1}{10}}(p,f)}= F|_{W^{uu}_{\frac{1}{10}}(p,f)}.$$
Let $I$ be the set of points of the intersections of $W^{uu}_{\frac{1}{10}}(p,f)$ and any $E$-curve $\gamma^E$ of length $3 \e$ centered at any $x \in B(q)$. Note that,
$$q \in B(p,\e), \angle (E^s, E) < \delta, \angle (E^u, F) < \delta, {\rm \ and \ } \frac{1}{10} >> \e.$$
 Then, it is not difficult to deduce that every intersection above is exactly one point. Also, the lower bound $a$ and upper bound $b$ of $I$ satisfy that
$$\max \{d(a,p),d(b,p) \} < 2 \e.$$
 Suppose on the contrary that $I \neq \{p\}$, say $b\not=p$.
 Then, we can take a point $y \in I$ close enough to $b$. By the definition of $I$, we take an $E$-curve $\gamma^E$ starting from $y$ to $B(q)$ of length smaller than  $3 \e$(see the following picture).
 \begin{figure}[h!]
\includegraphics[height=5cm, width=10cm]{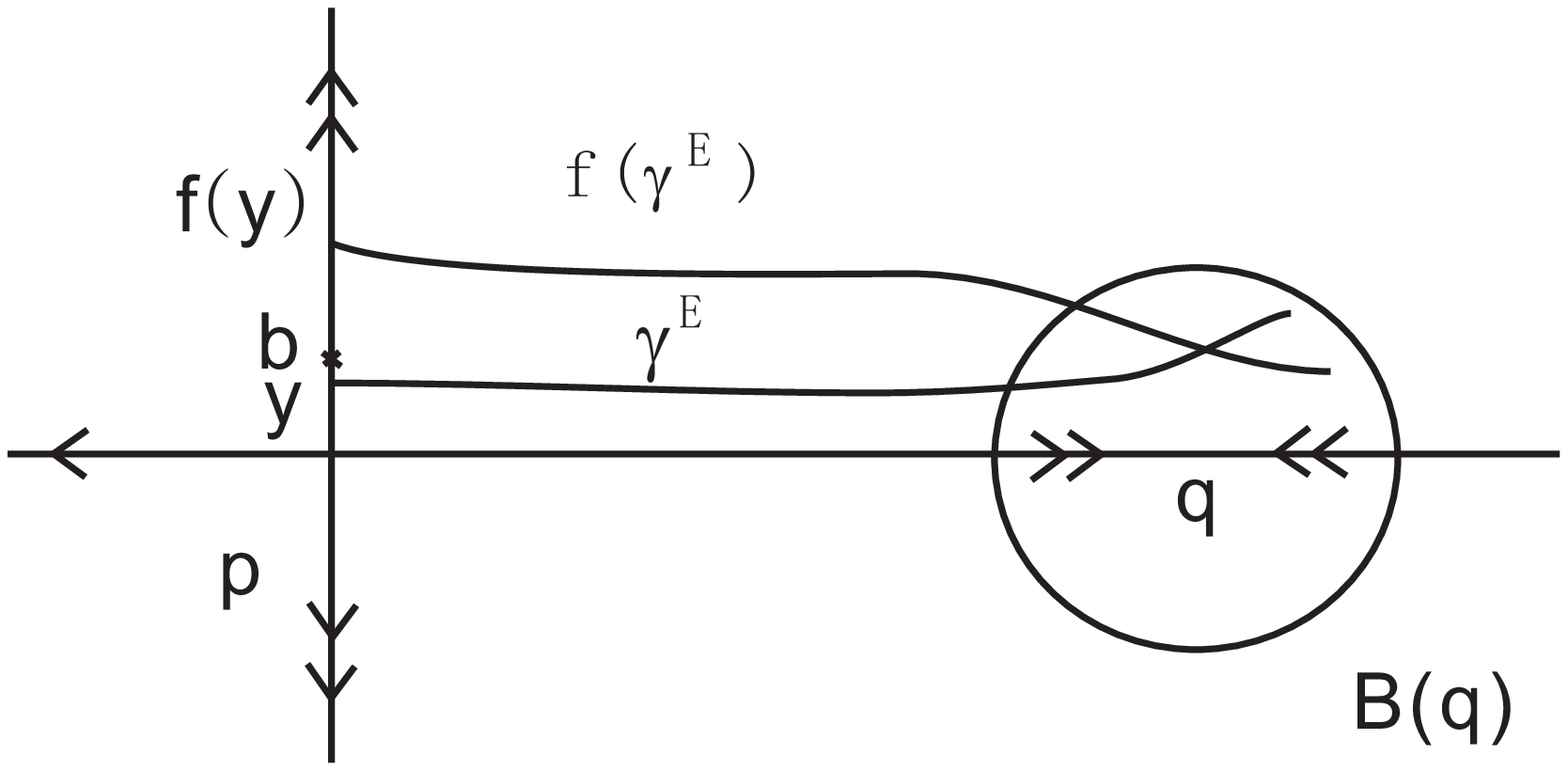}

\end{figure}
 \\Note that $f=f_A$ outside $B(p,\e)$, and $\angle (E^s, E) < \delta$. Then,
 $$\ell(f(\gamma^E)) < 3\e. $$
 By $2\e ||Df|| < \frac{1}{10} $, we see that
 $$f([a,b]) \subset W^{uu}_{\frac{1}{10}}(p,f). $$
 Then, $f(y) \in I$. By the uniform expanding of $f$ on the curve $W^{uu}_{\frac{1}{10}}(p,f)$, we see that the intersection $f(y) \not\in [a,b]$ . This contradiction finishes the proof of the lemma.
\end{proof}
{\noindent \bf Proof of the robust non-integrability of $E$.} Note that the above five properties of $f$ are robust. Then, by the above lemma, there exists an open neighborhood $\U$$ \ni f$ such that for any $g \in \U$, $g$ has the dominated splitting $E_g \oplus F_g$ , but $E_g$ is non-integrable.
\begin{Remark}Consider another saddle $q'$ in the $W^{s}(p,f_A)$. Then, the phenomenon in the above lemma also happens between saddle $q'$ and sink $q$.
\end{Remark}
\vspace{3mm}

{\noindent \bf Proof of Theorem \ref{thm}.} Let $p_1$ and $p_2$ be the two fixed point of $f_A$. Take a small enough $\e >0$. We construct the map $f$ as follows:
\begin{enumerate}
\item[(1)] {make the same perturbation in $B(p_1,\e)$ as the example above;}
\item[(2)] { make the symmetrical perturbation in $B(p_2,\e)$: for $f^{-1}_A$, we make the same perturbation in $B(p_2,\e)$ as the example above;}
\item[(3)] { $B(p_1,\e)$ and $B(p_2,\e)$ are disjoint. Also, $f$ has that
\begin{itemize}
\item {the length of two components of $W^{uu}_{ \frac{1}{10}}(p_1,f)-p_1$ both equals $\frac{1}{10}$,}
\item {the length of two components of $W^{ss}_{ \frac{1}{10}}(p_2,f)-p_2$ both equals $\frac{1}{10}$.}
\end{itemize}}
\end{enumerate}
Now, $f$ satisfies all properties in the theorem.

\end{document}